\theoremstyle{plain}
\newtheorem{theorem}{Theorem}
\newtheorem{proposition}[theorem]{Proposition}
\theoremstyle{definition}
\newcommand{\Z}{{\mathbb Z}}
\newcommand{\R}{{\mathbb R}}
\begin{document}

\title{An asymptotically sharp form of Ball's inequality by probability methods}
\author{
 Susanna Spektor}

\address{ Susanna Spektor,
\noindent Address: University of Alberta, Canada}
\email{sanaspek@gmail.com}

\subjclass[2000]{ 46E35}

\keywords{Cardinal $B$-spline, Ball's integral inequality.}
\thanks{}
\date{}
\maketitle

\thispagestyle{empty}

\setcounter{page}{1}


\section{Introduction}

To prove by probabilistic methods that every $(n-1)$-dimensional section of the unit cube in $\R^n$ has volume at most $\sqrt 2$, Ball \cite{Ball:1986} made essential use of the inequality
\begin{equation}\label{ball's ineq}
\frac{1}{\pi}\int_{-\infty}^{\infty} \left(\frac{\sin^2 t}{t^2}\right)^pdt\leq \frac{\sqrt 2}{\sqrt p}, \quad p\geq 1,
\end{equation}
 in which equality holds if and only if $p=1$.

 As we will see, the right side of (\ref{ball's ineq}) has the correct rate of decay though the limit of the ratio of the right to left side is $\displaystyle{\sqrt{\frac{3}{\pi}}}$ rather then $\sqrt 2$. Applying Ball's methods we put all of this into the followingimproved form of (\ref{ball's ineq}).

\begin{theorem}
Let
\begin{align*}
C(p):=
\left\{ \begin{array}{rcl}
\sqrt{\frac{3}{\pi}},&\quad 1\leq p \leq p_0\\
1+\frac{1}{\sqrt{3 \pi}}\frac{\left(\sqrt 5/6\right)^{2p-1}}{\sqrt p-1/2 \sqrt p},&\quad p> p_0,
\end{array}\right.
\end{align*}
where
\begin{align*}
\frac{\left(\sqrt 5/6\right)^{2p_0-1}}{\sqrt p_0-1/2 \sqrt p_0}=\left(1-\sqrt{3/{\pi}}\right) \pi
\end{align*}
so that $p_0=1.8414$ to $4$ D.

Than,
\begin{equation}\label{ineq theorem}
\frac{1}{\pi}\int_{-\infty}^{\infty} \left(\frac{\sin^2 t}{t^2}\right)^pdt\leq C(p) \frac{\sqrt{3/\pi}}{\sqrt p}\leq \frac{1}{\sqrt p}, \quad p\geq 1,
\end{equation}
the first two terms being equal if and only if $p=1$.

Further,
\begin{align*}
\lim_{p\longrightarrow \infty}\frac{1}{\pi}\int_{-\infty}^{\infty} \left(\frac{\sin^2 t}{t^2}\right)^pdt / \frac{\sqrt{3/\pi}}{\sqrt p}\leq \lim_{p \longrightarrow \infty} C(p)=1.
\end{align*}
\end{theorem}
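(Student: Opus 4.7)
The plan is to adapt Ball's splitting strategy, with a refined choice of cutoff that yields the sharp leading constant $\sqrt{3/\pi}/\sqrt p$ rather than Ball's $\sqrt 2/\sqrt p$. The key pointwise input is the inequality
\[
\left(\frac{\sin t}{t}\right)^{\!2}\leq e^{-t^{2}/3},\qquad |t|<\pi,
\]
which I would derive from the Weierstrass factorisation $\sin t/t=\prod_{k\geq1}(1-t^{2}/(k\pi)^{2})$: taking $-2\log$ yields the series
\[
-2\log\!\left(\frac{\sin t}{t}\right)=\sum_{n\geq1}\frac{2\zeta(2n)}{n\,\pi^{2n}}\,t^{2n}=\frac{t^{2}}{3}+\frac{t^{4}}{90}+\cdots,
\]
whose coefficients are all nonnegative and whose leading one equals $1/3$.

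With this, I would set the cutoff $M:=6/\sqrt{5}\in(0,\pi)$ and split
\[
\frac{1}{\pi}\int_{\R}\!\left(\frac{\sin^{2}t}{t^{2}}\right)^{\!p}\!dt=\frac{1}{\pi}\int_{|t|\leq M}+\frac{2}{\pi}\int_{M}^{\infty}.
\]
The inner part, after raising the pointwise estimate to the $p$th power and extending the Gaussian integral to $\R$, is at most $\tfrac{1}{\pi}\int_{\R}e^{-pt^{2}/3}\,dt=\sqrt{3/(\pi p)}$; and the trivial bound $|\sin t/t|\leq1/|t|$ gives
\[
\frac{2}{\pi}\int_{M}^{\infty}\frac{dt}{t^{2p}}=\frac{2\,M^{1-2p}}{\pi(2p-1)}=\frac{2(\sqrt{5}/6)^{2p-1}}{\pi(2p-1)}.
\]
Using the identity $2p-1=2\sqrt{p}\bigl(\sqrt{p}-1/(2\sqrt{p})\bigr)$ to factor $\sqrt{3/(\pi p)}$ out of the resulting sum produces exactly the large-$p$ form of $C(p)\sqrt{3/\pi}/\sqrt{p}$, so the first inequality in the theorem holds for $p>p_{0}$.

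For $1\leq p\leq p_{0}$ the claim reduces to $\tfrac{1}{\pi}\int(\sin t/t)^{2p}\,dt\leq1/\sqrt{p}$; at $p=1$ this is the Plancherel equality $\int(\sin t/t)^{2}\,dt=\pi$, and I would extend it to the narrow range $(1,p_{0}]$ through the monotonicity of $\sqrt{p}\cdot g(p)$, where $g(p)$ denotes the left-hand side, or equivalently by a $p$-dependent optimisation of $M$. The second inequality $C(p)\sqrt{3/\pi}/\sqrt{p}\leq1/\sqrt{p}$ is the algebraic condition $C(p)\leq\sqrt{\pi/3}$, which is precisely the defining property of $p_{0}$: the equation in the theorem encodes that at $p=p_{0}$ the large-$p$ formula equals $\sqrt{\pi/3}$, and since the correction term is strictly decreasing in $p$, the inequality holds for all $p>p_{0}$. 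The limiting statement $\lim_{p\to\infty}C(p)=1$ is immediate since $(\sqrt{5}/6)^{2p-1}$ decays geometrically while $\sqrt{p}-1/(2\sqrt{p})$ grows.

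The main technical obstacle I anticipate is the sharp bound $g(p)\leq1/\sqrt{p}$ on the short interval $[1,p_{0}]$: the Gaussian-plus-tail estimate with the fixed cutoff $M=6/\sqrt{5}$ only yields the weaker $C_{\mathrm{large}}(p)\sqrt{3/(\pi p)}$ bound there, so the sharper inequality must rely on the Plancherel equality at $p=1$ combined with a monotonicity argument for $\sqrt{p}\cdot g(p)$ (or a more careful $p$-dependent optimisation of the split). Everything else reduces to routine algebraic manipulation and the elementary properties of the exponential, gamma, and error functions.
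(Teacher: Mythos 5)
For $p>p_0$ your argument is, step for step, the one the paper gives: split the integral at $6/\sqrt5$, bound the central part by $\sqrt{3/\pi}/\sqrt p$ via a Gaussian majorant, bound the tails by $\frac{2}{\pi}\int_{6/\sqrt5}^\infty t^{-2p}\,dt=\frac{1}{\pi}\frac{(\sqrt5/6)^{2p-1}}{p-1/2}$, and factor out $\sqrt{3/(\pi p)}$ to obtain exactly $C(p)$. The only real difference is that the paper simply quotes Ball for the central estimate, while you rederive the pointwise bound $(\sin t/t)^2\le e^{-t^2/3}$ on $(-\pi,\pi)$ from the Weierstrass product; that derivation is sound (the coefficients $2\zeta(2n)/(n\pi^{2n})$ are all positive and the first equals $1/3$, and $6/\sqrt5<\pi$), and it makes the proof self-contained. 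Your handling of $C(p)\le\sqrt{\pi/3}$ for $p\ge p_0$ via monotonicity of the correction term, and of $\lim_{p\to\infty}C(p)=1$, likewise matches the paper.

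The genuine gap is the range $1\le p\le p_0$, and you have located it correctly. By the very definition of $p_0$ the splitting bound exceeds $1/\sqrt p$ on $[1,p_0)$, so an independent argument is required; you propose monotonicity of $\sqrt p\,g(p)$ but do not prove it, and it is not a triviality --- it amounts to the quantitative estimate $-g'(p)/g(p)\ge 1/(2p)$ on $[1,p_0]$, i.e.\ a lower bound on $\frac{2}{\pi}\int(\sin t/t)^{2p}\log|t/\sin t|\,dt$, which your sketch does not supply. You should be aware, however, that the paper's own proof is equally silent here: Section~3 establishes the inequality only for $p\ge p_0$ and never returns to $[1,p_0]$. (Note also that the theorem as printed, with $C(p)=\sqrt{3/\pi}$ on $[1,p_0]$, would assert $g(1)\le 3/\pi<1=g(1)$ and contradict the claimed equality at $p=1$; the intended value is evidently $C(p)=\sqrt{\pi/3}$, which is what you implicitly assumed.) In short, your proposal reproduces everything the paper actually proves and honestly flags the one step that neither you nor the paper closes; to complete it you would need to substantiate the monotonicity claim, e.g.\ by combining the log-convexity of $g$ with an explicit evaluation of $g'$ at a single point.
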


\section{Symmetric B-splines and the integral $\displaystyle{\int_{-\infty}^{\infty} \left(\frac{\sin^2 t}{t^2}\right)^pdt}$}

The symmetric $B$-splines, $\beta^n$, are defined inductively by
\begin{align*}
\beta^0(x):=\chi_{[-\frac 12, \frac 12]}(x) \quad \mbox{and} \quad \beta^n(x):=\int_0^1 \beta^{n-1}(x-y)
dy,
\end{align*}
$n=1,2,...$

Using known properties of these $B$-splines we obtain an asymptotic formula for our integral as $p\longrightarrow \infty$, namely

\begin{proposition}
\begin{equation}\label{lim theorem}
\frac{1}{\pi}\int_{-\infty}^{\infty} \left(\frac{\sin^2 t}{t^2}\right)^pdt \sim \frac{\sqrt{3/\pi}}{\sqrt p},  \quad \mbox{ as } \quad p\longrightarrow \infty.
\end{equation}
\end{proposition}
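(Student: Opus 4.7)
The plan is to identify the integral with the value of a $B$-spline at its centre of symmetry and then extract the asymptotic via the local central limit theorem. Computing $\widehat{\beta^0}(t)=\frac{2\sin(t/2)}{t}$ and using the recursion $\beta^n=\beta^{n-1}*\chi_{[0,1]}$, induction gives
\begin{equation*}
\widehat{\beta^n}(t)=e^{-int/2}\left(\frac{\sin(t/2)}{t/2}\right)^{\!n+1}.
\end{equation*}
Since $\beta^n$ is symmetric about its mean $n/2$, Fourier inversion at $x=n/2$ followed by the substitution $u=t/2$ produces, for every positive integer $m$,
\begin{equation*}
\beta^{2m-1}\!\left(m-\tfrac{1}{2}\right)=\frac{1}{\pi}\int_{-\infty}^{\infty}\!\left(\frac{\sin^{2}u}{u^{2}}\right)^{\!m}\!du.
\end{equation*}

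Next, I would interpret $\beta^{2m-1}$ as the probability density of $S=Y_1+\cdots+Y_{2m}$, a sum of $2m$ independent uniform random variables on unit intervals, for which $\operatorname{Var}(Y_i)=\tfrac{1}{12}$ and $\operatorname{Var}(S)=m/6$. The local central limit theorem then gives
\begin{equation*}
\beta^{2m-1}\!\left(m-\tfrac{1}{2}\right)\sim\frac{1}{\sqrt{2\pi\cdot m/6}}=\sqrt{\frac{3}{\pi m}}\qquad\text{as }m\to\infty,
\end{equation*}
which combined with the previous display proves \eqref{lim theorem} along positive integers. For general real $p$, one uses that $p\mapsto\int(\sin^{2}t/t^{2})^{p}\,dt$ is decreasing (the integrand is in $(0,1]$), so the integral at any $p$ is sandwiched between its values at $\lfloor p\rfloor$ and $\lceil p\rceil$; continuity of $\sqrt{3/(\pi p)}$ means both bounds share the same leading term, and the asymptotic transfers to all $p\ge 1$.

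The main technical point is the local CLT step, because one wants pointwise convergence of the density at the mean rather than mere distributional convergence. I would sidestep the general theorem by working directly with the integral: scaling $u=v/\sqrt{p}$ turns $(\sin u/u)^{2p}$ into $(\sin(v/\sqrt{p})/(v/\sqrt{p}))^{2p}$, which converges pointwise to $e^{-v^{2}/3}$ via the Taylor expansion $\sin(v/\sqrt p)/(v/\sqrt p)=1-v^{2}/(6p)+O(1/p^{2})$. Dominated convergence, using $|\sin u/u|\le 1$ globally and $|\sin u/u|\le c<1$ away from the origin (so that the tails are controlled for $p$ large), then yields $\int e^{-v^{2}/3}\,dv=\sqrt{3\pi}$, from which \eqref{lim theorem} follows directly without any appeal to external asymptotic results.
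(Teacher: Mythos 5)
Your argument is correct, and it takes a genuinely different route from the paper's in its second half. Both proofs begin by identifying $\frac{1}{\pi}\int(\sin^2 t/t^2)^m\,dt$ with the value of a $B$-spline at its centre of symmetry and both handle non-integer $p$ with the same monotonicity sandwich; but where you use Fourier inversion to obtain $\beta^{2m-1}(m-\tfrac12)$, the paper uses Plancherel to get $\int|\beta^m|^2=\beta^{2m}(0)$, and, more substantially, where the paper then simply cites Theorem~1 of Unser--Aldroubi--Eden for the Gaussian asymptotics of the spline at its centre, you supply that asymptotic yourself: first by the local central limit theorem, and then, correctly noting that pointwise convergence of densities at a point is more than distributional convergence, by a direct Laplace-method computation (substitute $u=v/\sqrt p$, pointwise limit $e^{-v^2/3}$, dominated convergence, $\int e^{-v^2/3}\,dv=\sqrt{3\pi}$). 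That final computation is in fact a complete self-contained proof for all real $p\ge 1$ at once, making both the $B$-spline detour and the integer-interpolation step unnecessary; what the paper's route buys is only brevity via the external citation. One step you should make explicit is the dominating function: a flat bound $|\sin u/u|^{2p}\le c^{2p}$ away from the origin is not integrable over an unbounded region. A clean fix is $|\sin x/x|\le e^{-x^2/6}$ for $|x|<\pi$ (every term of $\log(\sin x/x)=-x^2/6-x^4/180-\cdots$ is negative), which dominates the rescaled integrand by $e^{-v^2/3}$ for $|v|<\pi\sqrt p$, while $|\sin u/u|\le 1/|u|$ makes the remaining tail integrable and geometrically small. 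Also, strictly speaking, $\beta^{2m-1}$ is the density of one uniform on $[-\tfrac12,\tfrac12]$ plus $2m-1$ uniforms on $[0,1]$, i.e.\ a shift of a sum of $2m$ i.i.d.\ unit-length uniforms; that is exactly what your variance count $2m\cdot\tfrac{1}{12}=m/6$ uses, so the conclusion stands.
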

\begin{proof}
Suppose to begin with that $p \in \Z_+$, say $p=n$. Now,
\begin{align*}
\widehat{\beta^n}(t):=\int_{-\infty}^{\infty}\beta^n(s)e^{-2 \pi i t s}ds=\left(\frac{\sin \pi t}{\pi t}\right)^n,
\end{align*}
so Plancherel's theorem yields
\begin{align*}
\frac{1}{\pi}\int_{-\infty}^{\infty} \left(\frac{\sin^2 t}{t^2}\right)^ndt= \int_{- \infty}^{\infty}\left(\frac{\sin t}{t}\right)^{2n}dt=\int_{-\infty}^{\infty}|\beta^n(s)|^2 ds.
\end{align*}
Further, by \cite{Chui:book},
\begin{align*}
\int_{-\infty}^{\infty}{\beta^n}(s)^2 ds=\int_{-\infty}^{\infty}\beta_n(s)\beta_n(1)^n ds=\beta^{2n}(0).
\end{align*}
Again, according to Theorem 1 in \cite{Unser:1992},
\begin{align*}
{\beta^{2n}}\left(\sqrt{\frac{2n+1}{12}}x\right) \sim \sqrt{\frac{6}{\pi(2n+1)}} \exp(-x^2/2),
\end{align*}
so in particular,
\begin{align*}
\beta^{2n}(0) \sim \sqrt{\frac{6}{\pi(2n+1)}} \sim \frac{\sqrt{3/ \pi}}{\sqrt{\pi}}, \quad \mbox{as} \quad p\longrightarrow \infty.
\end{align*}

Finally, $\displaystyle{\int_{-\infty}^{\infty}\left(\frac{\sin^2 t}{t^2}\right)^p dt}$ is a decreasing function of $p$, so one has
\begin{equation}\label{eq 4}
\frac{1}{\pi}\int_{-\infty}^{\infty}\left(\frac{\sin^2 t}{t^2}\right)^{[p]+1} dt\leq \frac{1}{\pi}\int_{-\infty}^{\infty}\left(\frac{\sin^2 t}{t^2}\right)^{p} dt\leq \frac{1}{\pi}\int_{-\infty}^{\infty}\left(\frac{\sin^2 t}{t^2}\right)^{[p]}dt
\end{equation}
and hence (\ref{lim theorem}), since the extreme term in (\ref{eq 4}) are both asymptotically equal to $\displaystyle{\frac{\sqrt{3/ \pi}}{\sqrt{p}}}$.
\end{proof}
\section{Proof of Theorem 1}
Ball shows that
\begin{align*}
\frac{1}{\pi}\int_{-6/\sqrt{5}}^{6/\sqrt{5}}\left(\frac{\sin^2 t}{t^2}\right)^p dt \leq \frac{\sqrt{3/ \pi}}{\sqrt{p}}.
\end{align*}
Further,
\begin{align*}
\frac{1}{\pi}\int_{|t|\geq 6/\sqrt{5}}^{\infty}\left(\frac{\sin^2 t}{t^2}\right)^p dt \leq \frac{2}{\pi}\int_{\geq 6/\sqrt{5}}^{\infty} t^{-2p} dt= \frac{1}{\pi}\frac{(\sqrt{5}/6)^{2p-1}}{p-\frac12}.
\end{align*}
Altogether, then,
\begin{align*}
\frac{1}{\pi}\int_{-\infty}^{\infty}\left(\frac{\sin^2 t}{t^2}\right)^p dt \leq \left(1+ \frac{1}{\sqrt{3 \pi}} \frac{(\sqrt 5/ 6)^{2p-1}}{\sqrt p- 1/2 \sqrt p}\right) \frac{\sqrt{3/ \pi}}{\sqrt {\pi}}.
\end{align*}
Finally,
\begin{align*}
1+ \frac{1}{\sqrt{3 \pi}} \frac{(\sqrt 5/ 6)^{2p-1}}{\sqrt p- 1/2 \sqrt p}\leq \sqrt{\pi/3}, \quad \mbox{ for } \quad p\geq p_0. \hspace{2cm} \Box
\end{align*}


\end{document}